\newtheorem{thm}{Theorem}[section]
\newtheorem{Pro}{Proposition}[section]
\newtheorem{lemma}{Lemma}[section]
\newtheorem{Mythm}{Theorem}
\theoremstyle{definition}
\theoremstyle{definition}
\newtheorem{Rem}{Remark}[section]
\numberwithin{equation}{section}
\begin{document}
% Definition of title page:
\title[Moduli spaces of generalized Cantor sets]{Uniform domains and moduli spaces \\  of generalized Cantor sets}
\author{
    Hiroshige Shiga    % insert author(s) here
}
\address{Professor Emeritus at Tokyo Institute of Technology} 

\email{shiga@cc.kyoto-su.ac.jp}
\date{\today}    % optional
\keywords{Cantor set, Quasiconformal mapping}
\subjclass[2010]{Primary 30C62; Secondary 30F45}
\thanks{
The author was partially supported by the Ministry of Education, Science, Sports
and Culture, Japan;
Grant-in-Aid for Scientific Research (C), 22K03344.}

\begin{abstract}
We consider a generalized Cantor set $E(\omega)$ for an infinite sequence $\omega=(q_n)_{n=1}^{\infty}\in (0, 1)^{\mathbb N}$, and consider the moduli space $M(\omega)$ for $\omega$ which are the set of $\omega'$ for which $E(\omega')$ is conformally equivalent to $E(\omega)$.

In this paper, we may give a necessary and sufficient condition for $D(\omega):=\mathbb C\setminus E(\omega)$ to be a uniform domain. 
As a  byproduct, we give a condition for $E(\omega)$ to belong to $M(\omega_0)$, the moduli space of the standard middle one-third Cantor set. 
We also show that the volume of the moduli space $M(\omega)$ with respect to the standard product measure on $(0, 1)^{\mathbb N}$ vanishes under a certain condition for $\omega$.

\end{abstract}
\maketitle
\section{Introduction}
Let $\omega=(q_n)_{n=1}^{\infty}=(q_1, q_2, \dots)\in (0, 1)^{\mathbb N}$ be an infinite sequence of positive numbers with $0<q_n<1$ $(n\in \mathbb N)$.
We may create a Cantor set, which is denoted by $E(\omega)$, from the sequence $\omega$.
The standard middle one-third Cantor set $E(\omega_0)$ is given by $\omega_0=(\frac{1}{3})_{n=1}^{\infty}=(\frac{1}{3}, \frac{1}{3}, \dots )$.

The construction of $E(\omega)$ is the same as that of $E(\omega_0)$.
Namely, we start with the interval $I:=[0, 1]$ and remove an open interval $J_1$ with the length $q_1$ so that $I\setminus J_1$ consists of two closed intervals $I_1^1$ and $I_1^2$ of the same length.
We put $E_1=\bigcup_{i=1}^{2}I_1^{i}=I\setminus J_1$.
We remove an open interval of length $|I_1^{i}|q_2$ from each $I_1^{i}$ so that the remainder $E_2$ consists of four closed intervals of the same length, where $|J|$ is the length of an interval $J$.
Inductively, we define the set of the $(k+1)$-th step, $E_{k+1}$, from $E_k=\bigcup_{i=1}^{2^k}I_k^{i}$ by removing an open interval of length $|I_k^{i}|q_{k+1}$ from each closed interval $I_k^{i}$ of $E_k$ so that $E_{k+1}$ consists of $2^{k+1}$ closed intervals of the same length.
A Cantor set $E(\omega)$ for $\omega$ is defined by
\begin{equation*}
	E(\omega)=\bigcap_{k=1}^{\infty}E_k.
\end{equation*}
We call $E(\omega)$ \emph{the generalized Cantor set} for $\omega$.

We say that two generalized Cantor sets $E(\omega_1), E(\omega_2)$ are \emph{quasiconformally equivalent} if there exists a quasiconformal mapping $\varphi : {\mathbb C}\to {\mathbb C}$ such that $\varphi (E(\omega_1))=E(\omega_2)$.
For each $\omega=(q_n)_{n=1}^{\infty}$, we define \emph{the moduli space} ${M}(\omega)$ of $\omega$ as the set of $\omega '\in (0, 1)^{\mathbb N}$ so that $E(\omega')$ is quasiconformally equivalent to $E(\omega)$.
In this paper, we are interested in the moduli space $M(\omega)$ of $\omega$ from the view point of quasiconformal geometry and geometric function theory.

\medskip

In the previous paper \cite{Shiga}, we consider conditions for generalized Cantor sets to be quasiconformally equivalent, and we obtain the following theorem.
\begin{thm}[\cite{Shiga}]
\label{Thm1}
	Let $\omega=(q_n)_{n=1}^{\infty}$ and $\widetilde{\omega}=(\tilde q_n)_{n=1}^{\infty}$ be sequences with $\delta$-lower bound.
	We put
	\begin{equation}
		d(\omega, \widetilde{\omega})= \sup_{n\in \mathbb N} \max  \left\{\left | \log \frac{1-\tilde q_n}{1-q_n}\right |, |\tilde q_n -q_n|\right\}.
	\end{equation}
	\begin{enumerate}
		\item If $d(\omega, \widetilde{\omega})<\infty$, then there exists an $\exp(C(\delta)d(\omega, \widetilde{\omega}))$-quasiconformal mapping $\varphi$ on $\widehat{\mathbb{C}}$ such that $\varphi (E(\omega))=E(\widetilde{\omega})$, where $C(\delta)>0$ is a  constant depending only on $\delta$;
	\item if $\lim_{n\to \infty}\log \frac{1-\tilde q_n}{1-q_n}=0$, then $E(\widetilde\omega)$ is asymptotically conformal to $E(\omega)$, that is, there exists a quasiconformal mapping $\varphi$ on $\widehat{\mathbb C}$ with $\varphi (E(\omega))=E(\widetilde\omega)$ such that for any $\varepsilon>0$, $\varphi |_{U_{\varepsilon}}$ is $(1+\varepsilon)$-quasiconformal on a neighborhood $U_{\varepsilon}$ of $E(\omega)$. 
	\end{enumerate}	
\end{thm}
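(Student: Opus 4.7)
The plan is to construct $\varphi$ by decomposing $\mathbb{C}\setminus E(\omega)$ into a nested family of annular pieces indexed by the nodes of the binary tree underlying the Cantor construction, and to bound the dilation piece by piece by the level-$n$ contribution to $d(\omega,\tilde\omega)$.

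Concretely, around each interval $I_k^i\subset E_k$ I would choose a closed neighborhood $N_k^i$ (for instance a thin rectangle based on $I_k^i$) with vertical dimension comparable to $|I_k^i|$. The $\delta$-lower bound ensures one can arrange that $N_{k+1}^{2i-1}\cup N_{k+1}^{2i}\subset \operatorname{int} N_k^i$, that the $N_k^i$ at a common level $k$ are pairwise disjoint, and that each annular piece
\[ A_k^i := N_k^i \setminus (N_{k+1}^{2i-1}\cup N_{k+1}^{2i}), \]
a topological pair of pants containing the real-axis slit $J_{k+1}^i$, has modulus bounded above and below by a constant depending only on $\delta$. An analogous family $\tilde N_k^i,\tilde A_k^i$ is built for $\tilde\omega$. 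On each $A_k^i$ I define $\varphi$ as a quasiconformal homeomorphism onto $\tilde A_k^i$ that is affine on every boundary component. Since the geometries of $A_k^i$ and $\tilde A_k^i$ differ only through the ratios $q_{k+1}/\tilde q_{k+1}$ and $(1-q_{k+1})/(1-\tilde q_{k+1})$, an explicit interpolation on a fixed model slit pants yields
\[ K(\varphi|_{A_k^i})\leq \exp\!\Bigl(C(\delta)\bigl(|\tilde q_{k+1}-q_{k+1}|+\bigl|\log\tfrac{1-\tilde q_{k+1}}{1-q_{k+1}}\bigr|\bigr)\Bigr). \]
Taking $N_0^1=\tilde N_0^1$ and extending by the identity outside, the pieces paste together because the boundary affine maps agree across adjacent levels, and $\varphi$ extends continuously to a homeomorphism of $\widehat{\mathbb{C}}$ carrying $E(\omega)$ to $E(\tilde\omega)$ by a nested-intersection argument. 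The global dilation is the supremum of the per-level ones, which is at most $\exp(C(\delta)d(\omega,\tilde\omega))$; this gives (1).

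For (2), note that the $\delta$-lower bound combined with $\log\tfrac{1-\tilde q_n}{1-q_n}\to 0$ forces $|\tilde q_n-q_n|\to 0$, so the per-level bound above tends to $1$ as $k\to\infty$. Given $\varepsilon>0$, pick $N$ so large that the dilation of $\varphi$ on every $A_k^i$ with $k\geq N$ is at most $1+\varepsilon$; then $U_\varepsilon := \bigcup_{k\geq N,\,i} N_k^i$ is an open neighborhood of $E(\omega)$ on which $\varphi$ is $(1+\varepsilon)$-quasiconformal.

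The main obstacle I anticipate is the explicit quasiconformal interpolation on each slit pants with a dilation bound that is linear in $d$ at the infinitesimal level. A soft compactness argument would only give $K<\infty$ from $d<\infty$; the sharp exponential form $\exp(C(\delta)d)$ requires a hands-on construction, for instance by mapping both $A_k^i$ and $\tilde A_k^i$ to a common reference slit pants via affine maps with bounded quasiconformal correction, and composing. The uniform lower bound $\delta$ is precisely what keeps the moduli of the slit pants bounded away from $0$ and $\infty$ uniformly in $k$, making the constant $C(\delta)$ well-defined.
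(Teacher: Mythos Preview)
This theorem is not proved in the present paper: it is quoted from the author's earlier work \cite{Shiga} and used here only as background for Theorems~\ref{MyThm1}--\ref{thm:volume}. Consequently there is no proof in this paper to compare your proposal against.

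That said, your outline is the natural strategy and matches what the paper hints at elsewhere: Section~6 refers to the ``natural pants decomposition'' of $D(\omega)$ built from simple closed curves surrounding the intervals $I_k^i$ (citing \cite{Shiga}, \S3), which is exactly the combinatorial structure you are exploiting. Your identification of the key difficulty---obtaining the explicit bound $K\le\exp(C(\delta)d)$ rather than merely $K<\infty$ via compactness---is accurate; this is where the $\delta$-lower bound does real work, keeping all the model pants in a compact family so that a Lipschitz estimate for the dilation in terms of the parameter difference becomes available. One point to be careful about in your gluing step: to make the boundary maps match across levels you cannot simply take them ``affine on every boundary component'' independently, since the outer boundary of $A_{k+1}^{2i-1}$ is an inner boundary of $A_k^i$ and must carry the \emph{same} map from both sides; in practice one fixes the boundary identifications first (e.g.\ by the canonical affine bijection $I_k^i\to\tilde I_k^i$ extended to the rectangle boundary) and then fills in the interior, rather than the other way round.
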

Here, we say that a sequence $\omega =(q_n)_{n=1}^{\infty}$ has \emph{$\delta$-lower bound} if there exists $\delta\in (0, 1)$ such that $q_n>\delta$ for any $n\in \mathbb N$.
Therefore, it follows from Theorem \ref{Thm1} that for $\omega=(q_n)_{n=1}^{\infty}$, $E(\omega)$ and $E(\omega_0)$ are quasiconformally equivalent if $0<\inf_n q_n\leq\sup_n q_n<1$.

On the other hand, we have seen that if $\sup_n q_n=1$, then $E(\omega)$ and $E(\omega_0)$ are not quasiconformally equivalent (\cite{Shiga} Theorem III).
Thus, it is a problem what happens if $\inf_n q_n=0$.

At the first glance, the author considered that $E(\omega)$ and $E(\omega_0)$ might not be quasiconformally equivalent if $\inf_n q_n=0$.
However, we find that it is not true.
Indeed, we obtain a necessary and sufficient condition for $\omega$ so that $E(\omega)$ is quasiconformally equivalent to $E(\omega_0)$.
To see this, we analyze the domain $D(\omega):=\mathbb C\setminus E(\omega)$ and show the following theorem (see \S 2 for the terminologies, and \S\S 3-4 for the proof).

\begin{Mythm}
\label{MyThm1}
	Let $E(\omega)$ be a generalized Cantor set for $\omega=(q_n)_{n=1}^{\infty}$.
	Then, $D(\omega):= {\mathbb C}\setminus E(\omega)$ is a uniform domain if and only if $N(\omega, \delta)<\infty$ for some $\delta\in (0, 1)$. 
	In fact, if $N_0:=N(\omega, \delta)<\infty$, then there exists a constant $c=c(N_0, \delta)$ depending only on $N_0$ and $\delta$ such that $D(\omega)$ is a $c$-uniform domain.
	In fact, we may take $c(N_0, \delta)$ as
	\begin{equation*}
	c(N_0, \delta):=9\left (\frac{2}{1-\delta}\right )^{N_0+1}\delta^{-1}.
\end{equation*}
\end{Mythm}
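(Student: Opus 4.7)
My plan is to attack both directions geometrically, using the self-similar combinatorial structure of the Cantor construction. First I would fix notation: at level $k$ the set $E_k$ consists of $2^k$ intervals $I_k^i$ of common length $L_k := 2^{-k}\prod_{j=1}^k(1-q_j)$, and between the two children of any $I_{k-1}^i$ there is a removed gap $J_k^i$ of length $L_{k-1}q_k$. The complement $D(\omega)$ is then naturally stratified by these gaps together with the unbounded component at the ends. Although $N(\omega,\delta)$ is defined only in \S2, the natural candidate, given that $c(N_0,\delta)$ blows up in $N_0$, is a quantity measuring the longest run of consecutive indices $n$ at which $q_n<\delta$.

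\textbf{Sufficiency.} Assuming $N_0:=N(\omega,\delta)<\infty$, for every level $k$ there exists $\ell\in\{k,k+1,\dots,k+N_0\}$ with $q_\ell\ge\delta$, so the gap $J_\ell$ sitting inside $I_{k-1}^i$ is not too small relative to $L_{k-1}$. Given a pair $z_1,z_2\in D(\omega)$, I would locate the coarsest level $k_*$ at which they are separated by a removed gap, and then build a connecting curve $\gamma$ out of three pieces: vertical/semicircular escape arcs bringing each $z_i$ to a point above the nearest sufficiently wide gap at some level $\ell\le k_*+N_0$; a controlled arc through that wide gap $J_\ell$; and a horizontal or semicircular arc joining the two sides. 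The total length is comparable to $|z_1-z_2|$ up to a geometric-series factor in $\tfrac{1-\delta}{2}$ coming from the at most $N_0$ "skipped" levels, and the distance from any point on $\gamma$ to $E(\omega)$ is controlled by the size of the gap it is routed through. Carrying out the bookkeeping should yield exactly the constant $c(N_0,\delta)=9\bigl(\tfrac{2}{1-\delta}\bigr)^{N_0+1}\delta^{-1}$, the $(2/(1-\delta))^{N_0+1}$ factor tracking the geometric series over skipped levels and the $\delta^{-1}$ factor tracking the single wide gap used for crossing.

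\textbf{Necessity.} Conversely, if $N(\omega,\delta)=\infty$ for every $\delta\in(0,1)$, then for any $N$ and any $\delta$ there exist $n$ with $q_n,q_{n+1},\dots,q_{n+N-1}$ all $<\delta$. Inside $I_{n-1}^i$ the next $N$ subdivisions then remove only very narrow slits, so the nested $I_{n+N}^j$ has length comparable to $2^{-N}L_{n-1}$, while all separating slits in $I_{n-1}^i\setminus E_{n+N}$ have widths summing to at most $\delta L_{n-1}$. I would choose test points $z_1,z_2$ lying just off $I_{n+N}^j$ on opposite sides of one of these narrow slits at an intermediate level. Any rectifiable arc in $D(\omega)$ joining $z_1$ to $z_2$ must either stay close to $E(\omega)$ while crossing a narrow slit (violating the turning/distance condition with constant independent of $\delta$) or else exit the neighborhood of $I_{n-1}^i$ (forcing length $\asymp L_{n-1}$ while $|z_1-z_2|\asymp 2^{-N}L_{n-1}$). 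Letting $N\to\infty$ contradicts any fixed uniform constant.

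\textbf{Main obstacle.} I expect the serious technical difficulty to lie in the sufficiency direction: I need to construct the connecting curve so that it simultaneously achieves the length bound \emph{and} the turning/distance condition while traversing several gaps of potentially very different sizes, and to have the resulting constant come out as the claimed explicit expression. I anticipate splitting this into two lemmas, one producing a local "escape" arc from a point near $E(\omega)$ to the nearest sufficiently wide gap with multiplicative loss at most $(2/(1-\delta))^{N_0}$, and one splicing two such escape arcs together across a single wide gap of size $\ge\delta L_{k_*-1}$. The necessity direction should then be a cleaner limiting argument once the test points are placed correctly.
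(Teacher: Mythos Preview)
Your overall strategy is sound and matches the paper's in its essential mechanism---crossing $\mathbb R$ through a gap $J_\ell$ with $q_\ell\ge\delta$, guaranteed to exist within $N_0$ levels---and your reading of $N(\omega,\delta)$ as the maximal run of indices with $q_n<\delta$ is correct. The execution, however, is organized differently. The paper does not work with a ``coarsest separation level $k_*$'' at all; instead it first disposes of every pair $a,b$ lying on the same side of $\mathbb R$ (or on $\mathbb R$) by simply taking the hyperbolic geodesic in $\mathbb H$, which already satisfies (U1)--(U2) with constant $\pi/2$ since $\mathrm{dist}(z,\partial\mathbb H)\le\mathrm{dist}(z,\partial D(\omega))$. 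The only substantive case is $a\in\mathbb H$, $b\in\mathbb L$, and there the crossing level is selected not by horizontal separation but by the vertical distance $d=|\mathrm{Im}\,b|$: take the least $K$ with $|I_K|\le d$, then the first $N'\le N_0$ with $q_{K+N'}\ge\delta$; the curve is two straight segments from $b$ through the midpoint of $J_{K+N'}$ up to height $d$, joined to a single hyperbolic geodesic in $\mathbb H$ back to $a$. This uses exactly one gap and no multi-level escape arcs, so your anticipated ``escape lemma'' and ``splicing lemma'' are unnecessary---the hyperbolic geodesic absorbs all the horizontal travel for free, and the bookkeeping reduces to the single estimate $|I_{K+N'-1}|\ge\bigl(\tfrac{1-\delta}{2}\bigr)^{N'+1}d$, which is where $(2/(1-\delta))^{N_0+1}\delta^{-1}$ enters. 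Your plan would also work, but the curve would be more complicated and the constant harder to track.

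For necessity your outline coincides with the paper's: place $a,b$ symmetrically about $\mathbb R$ at height $\asymp N^{-L}|I_{M-1}|$ over the center of a block $I_{M-1}$ in which $q_M,\dots,q_{M+K}$ are all below $N^{-L}$, then show that any crossing point $z\in\gamma\cap\mathbb R$ lies in a gap too narrow for (U2) or forces $\gamma$ too long for (U1). The paper carries this out by an explicit choice of $K,L$ satisfying a two-sided inequality in $N$, but the idea is exactly your limiting argument.
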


The above theorem enables us to determine an element $\omega$ of $M(\omega_0)$ as follows, while the statement might be obtained from a theorem of MacManus \cite{MacManus}.

We denote by $\Omega_b\subset (0, 1)^{\mathbb N}$ the set of $\omega=(q_n)_{n=1}^{\infty}$ such that $\sup_{n\in \mathbb N}q_n <1$.
\begin{Mythm}
\label{MyThm2}
	An infinite sequence $\omega=(q_n)_{n=1}^{\infty}\in (0, 1)^{\mathbb N}$ belongs to the moduli space $M(\omega_0)$ of $\omega_0=(\frac{1}{3})_{n=1}^{\infty}$
	 if and only if
	\begin{enumerate}
		\item $\omega\in \Omega_b$, and
		\item $N(\omega, \delta)<\infty$ for some $\delta\in (0, 1)$. 
	\end{enumerate}
\end{Mythm}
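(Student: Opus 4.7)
The plan is to prove the two directions separately, leaning on Theorem \ref{MyThm1} and the cited results of \cite{Shiga}.

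For necessity, suppose there is a quasiconformal self-map $\varphi:\mathbb{C}\to\mathbb{C}$ with $\varphi(E(\omega))=E(\omega_0)$. Condition (1) is immediate from \cite{Shiga}, Theorem III (cited in the paragraph following Theorem \ref{Thm1}): if $\sup_n q_n=1$ then $E(\omega)$ and $E(\omega_0)$ are not quasiconformally equivalent. For condition (2), $D(\omega_0)$ is a uniform domain by Theorem \ref{MyThm1} applied to $\omega_0=(1/3)_n$ (for which $N(\omega_0,\delta)=0$ whenever $\delta<1/3$). Since uniformity of planar domains is preserved under global quasiconformal self-homeomorphisms of $\mathbb{C}$ (such maps being quasi-symmetric), $D(\omega)=\varphi^{-1}(D(\omega_0))$ is also uniform, and the reverse implication of Theorem \ref{MyThm1} yields $N(\omega,\delta)<\infty$ for some $\delta\in(0,1)$.

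For sufficiency, given (1) and (2), I would choose $\delta\in(0,1)$ with $N(\omega,\delta)<\infty$; the finiteness then permits the selection of an index $M$ so large that $q_n>\delta$ for every $n>M$. Introduce the auxiliary sequence $\widetilde\omega=(\tilde q_n)$ by setting $\tilde q_n=1/3$ for $n\le M$ and $\tilde q_n=q_n$ for $n>M$. Then $\widetilde\omega$ has $\min(\delta,1/3)$-lower bound and $\max(\sup_n q_n,1/3)<1$ upper bound, so $d(\widetilde\omega,\omega_0)<\infty$; Theorem \ref{Thm1}(1) then produces a quasiconformal self-map of $\widehat{\mathbb{C}}$ sending $E(\widetilde\omega)$ onto $E(\omega_0)$. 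It therefore suffices to construct a quasiconformal equivalence between $E(\omega)$ and $E(\widetilde\omega)$.

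Since $\omega$ and $\widetilde\omega$ agree for $n>M$, after $M$ steps of each Cantor construction both sides consist of $2^M$ closed intervals $I^i_M$ (respectively $\tilde I^i_M$) of common length within each system, and within each such interval the residual Cantor set is an affine copy of the same limit set determined by the tail $(q_{M+1},q_{M+2},\ldots)$. I would define $\Phi_0:\mathbb{R}\to\mathbb{R}$ as the piecewise-affine homeomorphism sending each $I^i_M$ onto the corresponding $\tilde I^i_M$ by the increasing affine bijection and interpolating affinely on each complementary removed interval; then $\Phi_0(E(\omega))=E(\widetilde\omega)$ and $\Phi_0$ is bi-Lipschitz with constants depending only on $M$, $\delta$, and $\sup_n q_n$. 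Extending by the Beurling--Ahlfors procedure yields a global quasiconformal self-map of $\mathbb{C}$, and composition with the map of the previous paragraph places $\omega$ in $M(\omega_0)$. The main obstacle I anticipate is precisely this extension step---tracking the bi-Lipschitz constants through the finitely many ``surgeries'' at the top of the construction and verifying that the resulting $\mathbb{C}$-extension has controlled dilatation---where the bounded-geometry hypothesis (1) combined with the tail lower bound from (2) enters essentially.
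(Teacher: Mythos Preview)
Your necessity argument is sound and close in spirit to the paper's: both invoke \cite{Shiga}, Theorem~III for condition~(1), and for condition~(2) both use that uniformity is a quasiconformal invariant together with Theorem~\ref{MyThm1}. (The paper packages the invariance step inside the cited Proposition~\ref{Pro:key}, but the content is the same.)

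Your sufficiency argument, however, contains a genuine error at the very first step. You write that finiteness of $N(\omega,\delta)$ ``permits the selection of an index $M$ so large that $q_n>\delta$ for every $n>M$.'' This is false: $N(\omega,\delta)<\infty$ only says that from any index one reaches a term $\ge\delta$ within a bounded number of steps; it does \emph{not} force $q_n\ge\delta$ eventually. The paper itself supplies a counterexample in \S2: for
\[
\omega=\Bigl(\tfrac13,\tfrac12,\tfrac13,\tfrac14,\dots,\tfrac13,(2n)^{-1},\tfrac13,(2n+2)^{-1},\dots\Bigr)
\]
one has $N(\omega,\delta)\le 2$ for every $\delta\le\tfrac13$, yet $\inf_n q_n=0$ and infinitely many $q_n$ lie below any fixed threshold. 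Consequently your auxiliary sequence $\widetilde\omega$ need not have any lower bound, Theorem~\ref{Thm1}(1) is inapplicable, and the entire reduction to a finite ``surgery'' at levels $\le M$ collapses. The construction of $\Phi_0$ as a piecewise-affine map agreeing with the identity after finitely many levels is therefore unavailable in general.

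The paper avoids this difficulty altogether. For sufficiency it does not attempt to build a quasiconformal map directly; instead it invokes the characterization of Proposition~\ref{Pro:key} (from \cite{Taylor}, \cite{MNP}): a Cantor-complement $D$ is quasiconformally equivalent to $D(\omega_0)$ if and only if $D$ is uniform and $\mathbb C\setminus D$ is uniformly perfect. Theorem~\ref{MyThm1} supplies uniformity from condition~(2), and condition~(1) yields uniform perfectness of $E(\omega)$ via the injectivity-radius Lemma~\ref{lemma:inj+=bounded}. If you want to salvage a constructive approach you would need a mechanism that handles arbitrarily small $q_n$ occurring infinitely often (with bounded gap lengths), not merely a tail modification.
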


Let $m_{\infty}$ be the standard product probability measure of $(0, 1)^{\mathbb N}$ by the Lebesgue measure of $(0, 1)$ (cf. \cite{Bog} Chap.\ 3 for infinite products of measures).
As a subset of $(0, 1)^{\mathbb N}$, we may consider the volume $m_{\infty}(M(\omega))$ of the moduli space $M(\omega)$ of $\omega\in (0, 1)^{\mathbb N}$.
Let $\Omega_b\subset (0, 1)^{\mathbb N}$ be the set of $\omega=(q_n)_{n=1}^{\infty}$ such that $\sup_{n\in \mathbb N}q_n <1$.
Then, we will show
\begin{Mythm}
\label{thm:volume}
	For $\omega\in \Omega_b$, $m_{\infty}(M(\omega))=0$.
\end{Mythm}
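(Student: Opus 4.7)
The strategy is to establish two facts: (a) $m_\infty(\Omega_b)=0$, and (b) $M(\omega)\subseteq\Omega_b$ for every $\omega\in\Omega_b$. Together they yield $m_\infty(M(\omega))\le m_\infty(\Omega_b)=0$.

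Assertion (a) is a direct product-measure computation. Writing $\Omega_b=\bigcup_{k=1}^{\infty}A_k$ with
\begin{equation*}
A_k:=\{(q_n)\in(0,1)^{\mathbb N}: q_n\le 1-1/k\ \text{for every}\ n\},
\end{equation*}
independence of the coordinates under $m_\infty$ gives
\begin{equation*}
m_\infty(A_k)=\prod_{n=1}^{\infty}\Bigl(1-\tfrac{1}{k}\Bigr)=0,
\end{equation*}
so $m_\infty(\Omega_b)=0$ by countable subadditivity.

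For assertion (b), I would exploit the equivalence
\begin{equation*}
\omega\in\Omega_b\iff E(\omega)\ \text{is uniformly perfect in the sense of Pommerenke.}
\end{equation*}
The condition $\sup_n q_n<1$ uniformly bounds the parent-to-child ratios $|I_{n-1}^{i}|/|I_n^{j}|=2/(1-q_n)$, which in turn bounds the moduli of round annuli in $D(\omega)$ separating points of $E(\omega)$. Conversely, any subsequence with $q_{n_k}\to 1$ lets one construct round annuli in $D(\omega)$ separating $E(\omega)$ whose moduli tend to infinity, violating uniform perfectness. Since uniform perfectness of a compact subset of $\widehat{\mathbb C}$ is preserved under quasiconformal self-maps of $\widehat{\mathbb C}$ (a classical theorem of Pommerenke), the property $\omega\in\Omega_b$ depends only on the quasiconformal equivalence class of $E(\omega)$; this proves (b). Hence $m_\infty(M(\omega))\le m_\infty(\Omega_b)=0$.

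The main obstacle is (b); step (a) is routine. The tools for (b), namely the uniform-perfectness characterisation of $\Omega_b$ and the QC invariance of uniform perfectness, are classical, and the same line of reasoning already underlies Theorem III of \cite{Shiga} in the special case $\omega=\omega_0$; the extension to arbitrary $\omega\in\Omega_b$ should go through verbatim.
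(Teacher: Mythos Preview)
Your overall architecture---reduce to (a) $m_\infty(\Omega_b)=0$ and (b) $M(\omega)\subset\Omega_b$, then combine---is exactly the paper's, and step (a) is carried out identically (the paper writes $\Omega_b=\bigcup_{k\ge 2}\Omega(k)$ with $\Omega(k)=\{q_n\le 1-k^{-1}\}$ and observes $m_\infty(\Omega(k))=0$).

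For step (b) the paper takes a closely related but distinct route. Instead of the equivalence $\omega\in\Omega_b\Longleftrightarrow E(\omega)$ uniformly perfect, it proves $\omega\in\Omega_b\Longleftrightarrow \mathrm{inj}(\omega)>0$ (Lemma~\ref{lemma:inj+=bounded}), established via the natural pants decomposition of $D(\omega)$, a collar--theorem lower bound on the boundary geodesics, and hyperbolic trigonometry; the QC invariance step then quotes Wolpert's inequality $\mathrm{inj}(\omega')\ge K(\varphi)^{-1}\mathrm{inj}(\omega)$. Your route replaces this hyperbolic machinery with the Euclidean observation that bounded parent-to-child ratios $2/(1-q_n)$ force bounded moduli for separating round annuli (and conversely), together with the QC quasi-invariance of annular moduli. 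Since for domains without isolated boundary points uniform perfectness of the boundary and positive injectivity radius are equivalent, the two arguments are really two faces of the same fact; yours is the more elementary one, while the paper's has the side benefit that Lemma~\ref{lemma:inj+=bounded} is reused in the proof of Theorem~\ref{MyThm2}.
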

The proof of the theorem is given in \S 6.

As seeing Theorem \ref{thm:volume}, we may propose the following conjecture.

\medskip
\noindent
{\bf Conjecture.} For any $\omega\in (0, 1)^{\mathbb N}$, $m_{\infty}(M(\omega))=0$.

\medskip
{\bf Acknowledgement.} The author thanks Prof. E. Kinjo for her careful reading of the manuscript and valuable comments.
He also thanks the referee for the careful reading of the manuscript and valuable comments.

\section{Preliminaries}
First of all, we define the number $N(\omega, \delta)$ in Theorems \ref{MyThm1} and \ref{MyThm2}.

For $\omega=(q_n)_{n=1}^{\infty}$ and $\delta\in (0, 1)$, we define $\omega (\delta ; i)$ $(i\in \mathbb N)$  by
\begin{equation*}
	\omega (\delta; i)= \inf \{k\in \mathbb N \mid  q_{i+k}\geq \delta\}\footnote{If $\{k\in \mathbb N\mid q_{i_k}\geq\delta\}=\emptyset$, then $\omega(\delta; i)=+\infty$.}
\end{equation*}
and $N(\omega, \delta)$ by
\begin{equation*}
	N(\omega, \delta)=\sup_{i\in \mathbb N} \omega (\delta ; i).
\end{equation*}

For example, $N(\omega_0 , \delta )=1$ for any $\delta\in (0, \frac{1}{3}]$. 
It is easy to see that if $q_n\to q >0$ as $n\to \infty$, then $N(\omega, \delta)<\infty$ for any $\delta\in (0, q)$.
On the other hand, if $q_n\to 0$ as $n\to \infty$, then $N(\omega, \delta)=\infty$ for any $\delta >0$.
Also, for $\omega =(\frac{1}{3}, \frac{1}{2}, \frac{1}{3}, \frac{1}{4}, \dots , \frac{1}{3}, (2n)^{-1}, \frac{1}{3}, (2n+2)^{-1}, \dots )$, we have $N(\omega , \delta)=1$ for any $\delta\in (0, \frac{1}{3}]$ but  $\inf_{n}q_n=0$. 

\medskip
Next, we define uniform domains (cf. \cite{Gehring-Hag}, \cite{Vaisala}).

A domain $D\subset \mathbb C$ is $\emph{uniform}$ if there exists a constant $c\geq 1$ such that  any $a, b\in D$ can be joined by a curve $\gamma$ in $D$ so that for each $z\in \gamma$,
\begin{description}
	\item [(U1)] $|\gamma|\leq c|a-b|$;
	\item [(U2)] $\min \{|\gamma_1|, |\gamma_2|\}\leq c\ \textrm{dist}(z, \partial D)$,
\end{description} 
where $\gamma_1, \gamma_2$ are connected components of $\gamma\setminus\{z\}$, $|\alpha|$ is the length of a curve $\alpha$, and $\textrm{dist}(\cdot, \cdot)$ stands for the Euclidian distance.

Typical uniform domains are the upper half plane $\mathbb H$ and the unit disk.
In fact, for $a, b\in \mathbb H$, the hyperbolic geodesic $\gamma$ connecting $a$ and $b$ satisfies (U1) and (U2) for $c=\frac{\pi}{2}$.
Namely, inequalities
\begin{equation}
\label{eqn:upper1}
	|\gamma|\leq \frac{\pi}{2}|a-b|
\end{equation}
and
\begin{equation}
\label{eqn:upper2}
	\min\{|\gamma_1|, |\gamma_2|\}\leq \frac{\pi}{2}\textrm{dist}(z, \partial \mathbb H)
\end{equation}
hold for any $z\in \gamma$.

Uniform domains play important roles in quasiconformal geometry and geometric function theory.
For example, quasi-disks are characterized as simply connected uniform domains.
One may consult the details in \cite{Gehring-Hag} and \cite{Vaisala}.

\medskip
Finally in this section, we define the uniform perfectness.
A closed set in $\mathbb C$ is \emph{uniformly perfect} if there exists a constant $c\in (0, 1)$ such that 
\begin{equation}
	E\cap\{z\in \mathbb C \mid cr<|z-a|<r\}\not=\emptyset
\end{equation}
for any $a\in E$ and $r\in (0, \textrm{diam}E)$, where $\textrm{diam}(A)$ is the Euclidean diameter of a set $A$ in $\mathbb C$.

It is easy to see that the standard Cantor set $E(\omega_0)$ is uniformly perfect.
The uniform perfectness is related to various topics.
For instance, it is known (cf. \cite{Sugawa}) that the limit sets of some Kleinian groups are uniformly perfect.

\section{Proof of Theorem \ref{MyThm1} (Part 1)}
In this section, we shall prove the sufficiency of the condition.
Namely, we show that if $N(\omega, \delta)<\infty$ for some $\delta>0$, then $D(\omega)$ is a $c$-uniform domain for some $c>1$ depending only on $N(\omega, \delta)$ and $\delta$.

\medskip
Suppose that $N_0 :=N(\omega, \delta)<\infty$ for some $\delta\in (0, 1)$.
Let $a, b$ be an arbitrary pair of points of $D(\omega)$.
We find a curve $\gamma$ connecting $a$ and $b$ which satisfies the conditions (U1) and (U2) of the definition of uniform domains.

\medskip
\noindent
{\bf Case 1: }$a, b\in \mathbb H$ (or in the lower half plane $\mathbb L$).

Let $\gamma$ be the hyperbolic geodesic $\gamma$ in $\mathbb H$ connecting $a$ and $b$.
From (\ref{eqn:upper1}), we have
\begin{equation*}
	|\gamma|\leq \frac{\pi}{2}|a-b|.
\end{equation*}
Moreover, since $\textrm{dist}(z, \partial \mathbb H)\leq\textrm{dist}(z, \partial D(\omega))$ for $z\in \gamma$, we have
from (\ref{eqn:upper2})
\begin{equation*}
	\min \{\gamma_1, \gamma_2\}\leq\frac{\pi}{2}\textrm{dist}(z, \partial\mathbb H)\leq \frac{\pi}{2}\textrm{dist}(z, \partial D(\omega)).
\end{equation*}
Hence, (U1) and (U2) are satisfied for $c=\frac{\pi}{2}$.

\medskip
\noindent
{\bf Case 2: } $a\in D(\omega)\setminus \mathbb R$ and $b\in D(\omega)\cap\mathbb R$ (or $b\in D(\omega)\setminus \mathbb R$ and $a\in D(\omega)\cap\mathbb R$).

We may assume that $a\in \mathbb H$.
Let $\gamma$ be a hyperbolic geodesic in $\mathbb H$ whose endpoints are $a$ and $b\in \partial\mathbb H$.
It is obvious that
\begin{equation*}
	|\gamma|\leq \frac{\pi}{2}|a-b|.
\end{equation*}
Hence, the condition (U1) holds for $c=\frac{\pi}{2}$.

To consider the condition (U2), we take $z\in \gamma$.
We may assume that $z\not= a, b$,
 and put $\gamma\setminus \{z\}=\gamma_1\cup\gamma_2$.
We label $\gamma_1$ as $\gamma_1\ni a$.
For a sufficiently small $\varepsilon >0$, we take $z_{\varepsilon}\in \gamma_2$ so that $|\gamma_{\varepsilon}|=\varepsilon$, where $\gamma_{\varepsilon}$ is the connected component of $\gamma_2\setminus\{z_{\varepsilon}\}$ containing $b$.
We may assume that $z\in \gamma -\gamma_{\varepsilon}$.

By applying (\ref{eqn:upper2}) to the geodesic $\gamma   -\gamma_{\varepsilon}$, we obtain
\begin{equation*}
	\min \{|\gamma_1|, |\gamma_2 - \gamma_{\varepsilon}|\}\leq \frac{\pi}{2}\textrm{dist}(z, \partial \mathbb H)\leq\frac{\pi}{2}\textrm{dist}(z, \partial D(\omega)).
\end{equation*}
Thus, we have
\begin{equation*}
	\min \{|\gamma_1|, |\gamma_2|\}=\lim_{\varepsilon\to 0}\min \{|\gamma_1|, |\gamma_2-\gamma_{\varepsilon}|\}\leq \frac{\pi}{2}\textrm{dist}(z, \partial D(\omega)).
\end{equation*}
Hence, the condition (U2) still holds for $c=\frac{\pi}{2}$.

\medskip
\noindent
{\bf Case 3: }$a, b\in D(\omega)\cap \mathbb R$.

Let $\gamma$ be the hyperbolic geodesic joining $a$ and $b$. Then, the same argument as in Case 2 works and we may show that the conditions (U1) and (U2) hold for $c=\frac{\pi}{2}$.

\medskip
\noindent
{\bf Case 4: }$a\in \mathbb H$ and $b\in \mathbb L$ (or $a\in \mathbb L$ and $b\in \mathbb H$).

We will show that there exists a constant $c\geq 1$ such that for any $a\in \mathbb H$ and $b\in \mathbb L$ there exists a curve $\gamma$ connecting $a$ and $b$ in $D(\omega)$ so that (U1) and (U2) hold for $c$ and $\gamma$.

We may assume that $\textrm{Im }a=\textrm{dist}(a, \mathbb R)\geq \textrm{dist}(b, \mathbb R)=|\textrm{Im }b|$.
We put $b_{\infty}=\textrm{Re }b$ and $d=\textrm{dist}(b, \mathbb R)=|b-b_{\infty}|$.

First, we suppose that $b_{\infty}\in E(\omega)$.
We may assume that $b_{\infty}\in I_1^1$.
%We show it by contradiction.
%
%Suppose that for any $n\in \mathbb N$ there exist $a_n\in \mathbb H$, $b_n\in \mathbb L$ such that any curve $\gamma$ connecting $a_n$ and $b_n$ in $D(\omega)$ does not satisfy (U1) or (U2) for $c=n$.
%Taking subsequences of $\{a_n\}$ and $\{b_n\}$, we may assume that $\{a_n\}$ converges to some $a_{\infty}$ and $\{b_n\}$ converges to $b_{\infty}$.

\medskip
\noindent
{\bf Case 4-(i).} $b_{\infty}\in E(\omega)$ and $d>|I_1^1|=\frac{1}{2}|1-q_1|$.

Put $\tilde a=-d+d\sqrt{-1} , \tilde b=-d-d\sqrt{-1}$.
Let $\tilde \gamma$ be the union of two line segments, $\overline{b\tilde b}$ and $\overline{\tilde b\tilde a}$.
We also take the hyperbolic geodesic $\hat \gamma$ in $\mathbb H$ connecting $a$ and $\tilde a$.
Then, $\gamma:=\tilde \gamma\cup\hat \gamma$ is a curve in $D(\omega)$ connecting $a$ and $b$ (\textsc{Figure 1}).

Let $z$ be a point on $\gamma$ and $\gamma\setminus\{z\}=\gamma_1\cup\gamma_2$, where $\gamma_1$ and $\gamma_2$ are components of $\gamma\setminus\{z\}$ with $|\gamma_1|\leq |\gamma_2|$.

\begin{figure}[htbp]
	\includegraphics[width=15cm]{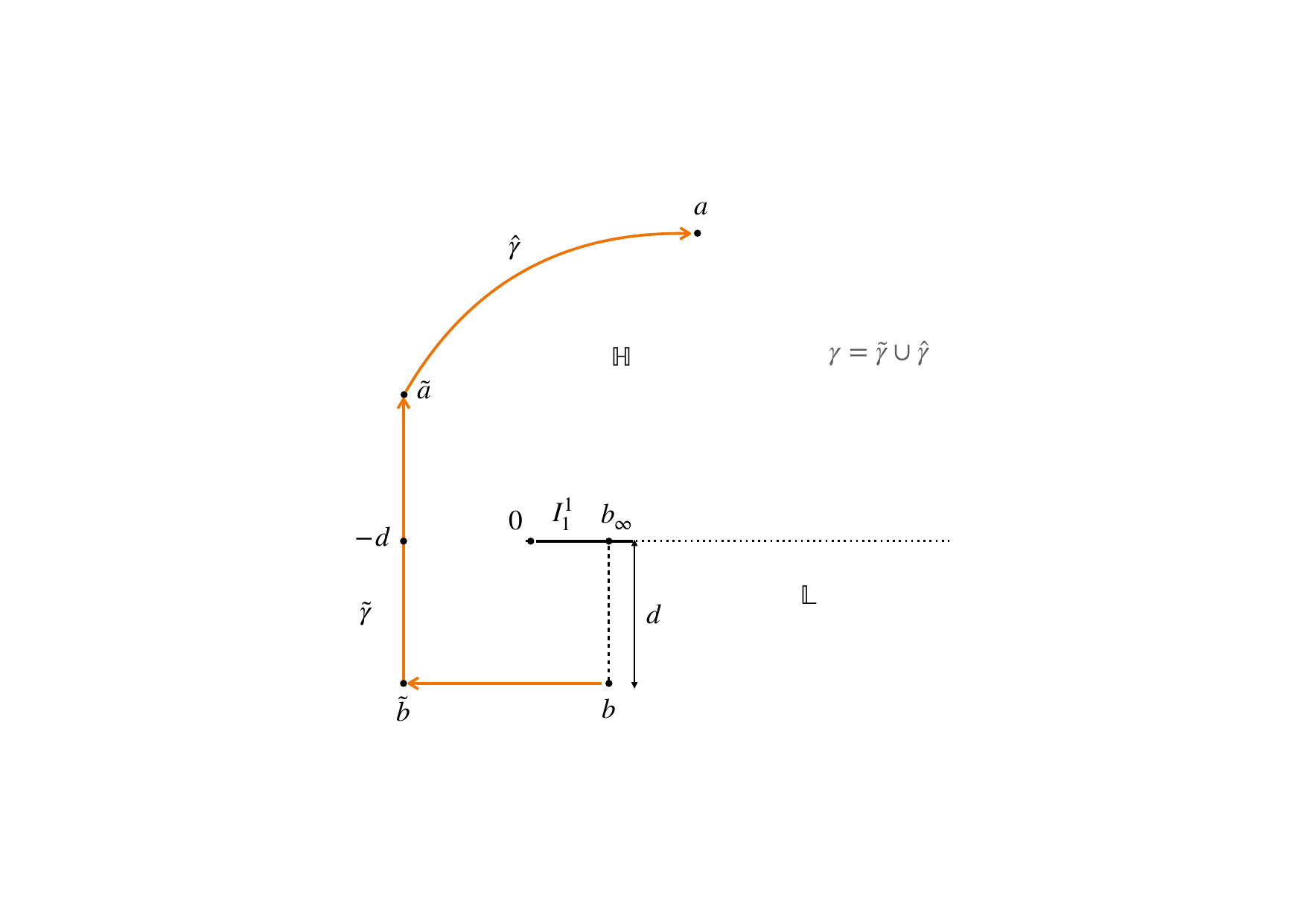}
	\caption{}
\end{figure}

If $z\in \tilde \gamma$, then we have
\begin{equation*}
	|\gamma_1|\leq |\tilde \gamma_z|\leq |\tilde \gamma|\leq 3d+|I_1^1|\leq 4d\leq 4\textrm{dist}(z, \partial D(\omega)),
\end{equation*}
where $\tilde \gamma_z$ is the component of $\tilde \gamma\setminus\{z\}$ containing $b$.

If $z\in \hat \gamma$ and $a\in\gamma_1$, then from (\ref{eqn:upper2})
\begin{equation*}
	|\gamma_1|\leq \frac{\pi}{2} \textrm{dist}(z, \mathbb R)\leq \frac{\pi}{2}\textrm{dist}(z, \partial D(\omega)).
\end{equation*}

If $z\in \hat \gamma$ and $a\not\in\gamma_1$, then $|\gamma_1|=|\hat \gamma_1|+|\tilde \gamma|$, where $\hat \gamma_1$ is the component of $\hat \gamma\setminus\{z\}$ with $\hat \gamma_1\ni \tilde a$.
We have
\begin{equation}
\label{eqn:3-1}
	|\hat \gamma_1|\leq \frac{\pi}{2}\textrm{dist}(z, \mathbb R)\leq \frac{\pi}{2}\textrm{dist}(z, \partial D(\omega))
\end{equation}
from (\ref{eqn:upper2}).
Noting that $\textrm{Im }a\geq |\textrm{Im }b|=d=\textrm{Im }\tilde a$, we see that $\textrm{dist}(z, \mathbb R)\geq d$.
Hence,
\begin{equation}
\label{eqn:3-2}
	|\tilde \gamma|\leq 4d\leq 4\textrm{dist}(z, \mathbb R)\leq 4\textrm{dist}(z, \partial D(\omega)).
\end{equation}
It follows from (\ref{eqn:3-1}) and (\ref{eqn:3-2}) that
\begin{equation*}
	|\gamma_1|=|\tilde \gamma|+|\hat \gamma| \leq \left (\frac{\pi}{2}+4\right )\textrm{dist}(z, \partial D(\omega)).
\end{equation*}

In any case, we verify that the curve $\gamma=\hat \gamma\cup\tilde \gamma$ is a curve connecting $a$ and $b$ in $D(\omega)$ which satisfies the condition (U2) for $c=\frac{\pi}{2}+4$.

We consider the condition (U1) for $\gamma$.
Since $2d\leq |a-b|$, we have 
\begin{equation*}
	|\hat \gamma|\leq \frac{\pi}{2}|a-\tilde a|\leq \frac{\pi}{2}\left (|a-b|+\tilde \gamma\right )\leq \frac{\pi}{2}\left (|a-b|+4d\right )\leq \frac{3\pi}{2}|a-b|.
\end{equation*}
Hence,
\begin{equation*}
	|\gamma|=|\hat \gamma|+|\tilde \gamma|\leq \frac{3\pi}{2}|a-b|+4d\leq \left (\frac{3\pi}{2}+2\right )|a-b|.
\end{equation*}
Thus, we see that $\gamma$ satisfies the condition (U1) for $c=\frac{3\pi}{2}+2>\frac{\pi}{2}+4$, and it satisfies the conditions (U1) and (U2) for $c=\frac{3\pi}{2}+2$.

\medskip

\noindent
{\bf Case 4-(ii)} $b_{\infty}\in E(\omega)$ and $d\leq |I_1^1|=\frac{1}{2}|1-q_1|$.

Let $K\in \mathbb N$ be the minimal number with $|I_K^1|\leq d$. Since $b_{\infty}\in E(\omega)$, there exists a decreasing sequence $\{I_n^{\epsilon_n}\}_{n=1}^{\infty}$ of closed intervals $I_n^{\epsilon_n}$ $(\epsilon_n\in \{1, \dots , 2^{n} \})$ such that $\{b_{\infty}\}=\bigcap_{n=1}^{\infty}I_n^{\epsilon_n}$.
We put $I_n:=I_n^{\epsilon_n}$.

From the definition of $N(\omega, \delta)$, we may find $N'\in \{1, 2, \dots , N(\omega, \delta)\}$ such that $q_{K+N'}\geq\delta$ but $0<q_{K+i}<\delta$ for $i=1, \dots , N'-1$.
We have
\begin{equation}
\label{eqn:3-3}
	d\geq |I_{K+N'-1}|=|I_{K-1}|\prod_{i=0}^{N'}\frac{1-q_{K+i}}{2}\geq \left (\frac{1-\delta}{2}\right )^{N'+1}d,
\end{equation}
and
\begin{equation}
\label{eqn:3-4}
	|I_{K+N'}|\leq |I_{K}|\leq d.
\end{equation}
Let $J_{K+N'}(\subset I_{K+N'-1})$ be the open interval appearing at the $(K+N')$-th step of the construction of $E(\omega)$.
Then, we have 
\begin{equation}
\label{eqn:3-5}
	\left (\frac{1-\delta}{2}\right )^{N'+1}\delta d\leq |J_{K+N'}|=q_{K+N'}|I_{K+N'-1}|\leq d.
	\end{equation}

Let $b_{\textrm{mid}}$ be the middle point of $J_{K+N'}$. 
Put $\tilde a=b_{\textrm{mid}}+d\sqrt{-1}$ and $\tilde b=b_{\textrm{mid}}-d\sqrt{-1}$. 
Let $\tilde \gamma$ be the union of two line segments, $\overline{b\tilde b}$ and $\overline{\tilde b\tilde a}$.
We also take the hyperbolic geodesic $\hat \gamma$ in $\mathbb H$ connecting $a$ and $\tilde a$.
Then, $\gamma:=\tilde \gamma\cup\hat \gamma$ is a curve in $D(\omega)$ connecting $a$ and $b$ (\textsc{Figure 2}).

\begin{figure}[htbp]
	\includegraphics[width=15cm]{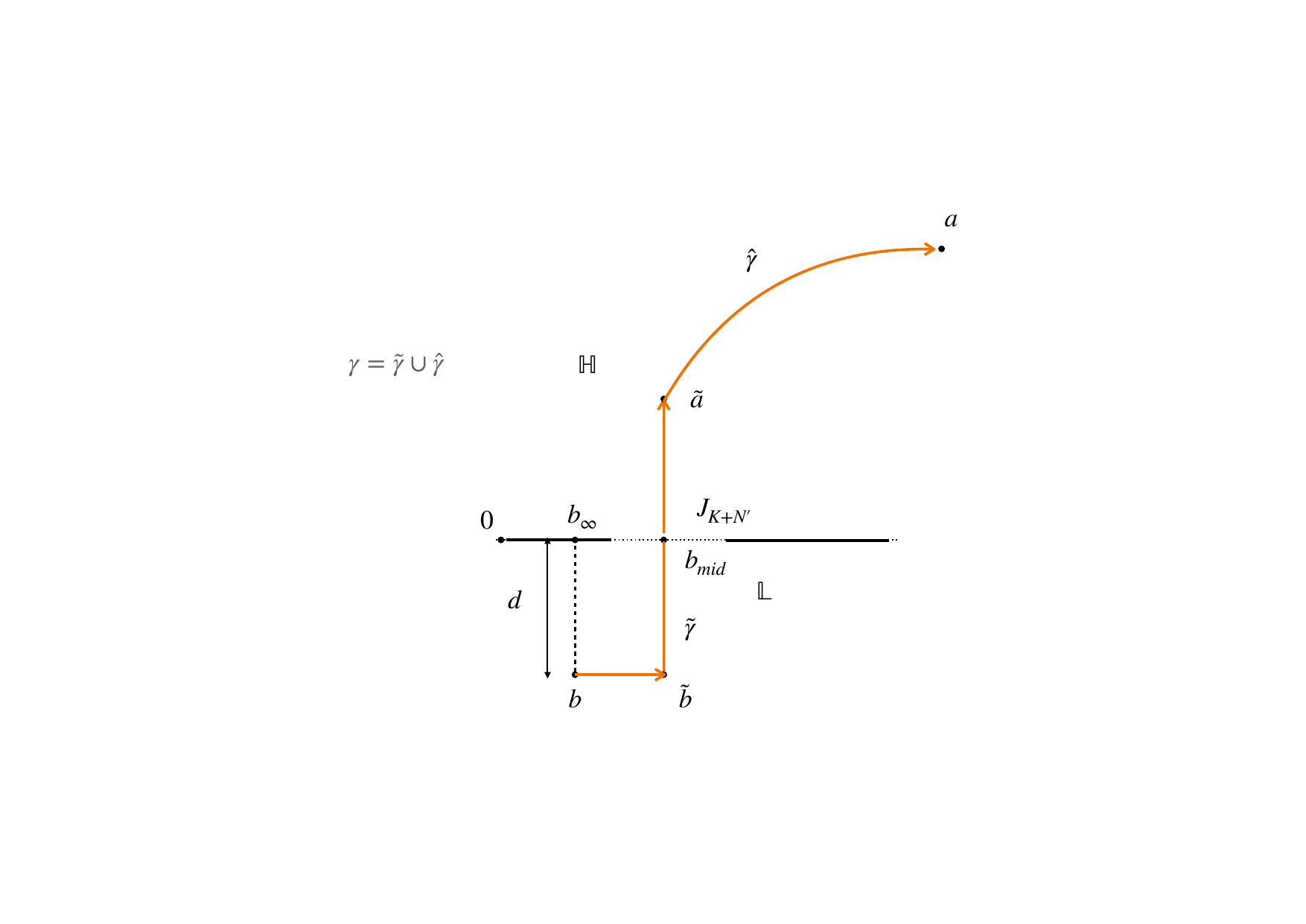}
	\caption{}
\end{figure}

Since $b_{\infty}\in I_{K+N '}$, we have from (\ref{eqn:3-3}) and (\ref{eqn:3-4})
\begin{equation*}
	|\tilde \gamma|\leq |I_{K+N'}|+\frac{1}{2}|J_{K+N'}|+2d\leq 4d.
\end{equation*}
Hence,
\begin{equation*}
	|\gamma|=|\hat \gamma|+|\tilde \gamma|\leq \frac{\pi}{2}|a-\tilde a|+4d. 
\end{equation*}
Also, we have
\begin{equation*}
	|a-\tilde a|\leq |a-b|+|\tilde \gamma|\leq |a-b|+4d \leq 3|a-b|
\end{equation*}
because $2d\leq |a-b|$.
Thus, we obtain
\begin{equation}
\label{eqn:3-0}
	|\gamma|\leq \left (\frac{3\pi}{2}+2\right )|a-b|.
\end{equation}
The curve $\gamma$ satisfies the condition (U1) for $c=\frac{3\pi}{2}+2$.

Now, we consider the condition (U2).
Let $z$ be a point on $\gamma$ and $\gamma\setminus\{z\}=\gamma_1\sqcup\gamma_2$, where $\gamma_1$ and $\gamma_2$ are components of $\gamma\setminus\{z\}$ with $|\gamma_1|\leq |\gamma_2|$.

(i) If $z\in \hat \gamma$, then we put $\hat \gamma\setminus \{z\}=\hat \gamma_1 \sqcup \hat \gamma_2$, where $\hat \gamma_1\ni a$. 
\begin{description}
	\item [(i)-a] If $|\hat \gamma_1|\leq |\hat \gamma_2|$, then from (\ref{eqn:upper2})
\begin{equation*}
	|\gamma_1|=|\hat \gamma_1| \leq \frac{\pi}{2}\textrm{dist} (z, \partial \mathbb H )\leq \frac{\pi}{2}\textrm{dist}(z, \partial D(\omega)).
\end{equation*}
 \item [(i)-b] If $|\hat \gamma_1| > |\hat \gamma_2|$, $|\hat \gamma_1|\leq |\hat \gamma_2|$, then from (\ref{eqn:upper2})
\begin{equation*}
	|\hat \gamma_2| \leq \frac{\pi}{2}\textrm{dist} (z, \partial \mathbb H )\leq \frac{\pi}{2}\textrm{dist}(z, \partial D(\omega)).
\end{equation*}
As we have seen as above,
\begin{equation*}
	|\tilde \gamma|\leq 4d,
\end{equation*}
and
\begin{equation*}
	d\leq \textrm{dist}(z, \partial D(\omega))
\end{equation*}
because $z\in \hat \gamma$.
Hence, we obtain
\begin{equation*}
	|\gamma_1|\leq |\hat \gamma_2|+|\tilde \gamma|\leq \left (\frac{\pi}{2}+4\right )\textrm{dist}(z, \partial D(\omega)).
\end{equation*}
\end{description}

(ii) If $z\in \overline{\tilde a \tilde b}$, then from (\ref{eqn:3-5}) we have
\begin{equation}
\label{eqn:3-6}
	\textrm{dist}(z, \partial D(\omega))\geq \frac{1}{2}|J_{K+N'}|\geq \frac{1}{2}\left (\frac{1-\delta}{2}\right )^{N'+1}\delta d,
\end{equation}
and
\begin{equation*}
	|\gamma_1|\leq |\overline{b\tilde b}|+d+\textrm{Im }z\leq |\overline{b\tilde b}|+2d.
\end{equation*}
Noting that
\begin{equation*}
	|\overline{b\tilde b}|\leq |I_{K+N'}|+\frac{1}{2}|J_{K+N'}|\leq 2d,
\end{equation*}
we obtain
\begin{equation}
\label{eqn:3-7}
	|\gamma_1|\leq 4d.
\end{equation}
Thus, we have from (\ref{eqn:3-6}) and (\ref{eqn:3-7})
\begin{eqnarray}
	|\gamma_1|&\leq &8\left (\frac{2}{1-\delta}\right )^{N'+1}\delta^{-1}\textrm{dist}(z, \partial D(\omega)) \nonumber \\
	\label{eqn:3-8}
	&\leq &8\left (\frac{2}{1-\delta}\right )^{N_0+1}\delta^{-1}\textrm{dist}(z, \partial D(\omega)).
\end{eqnarray}

(iii) If $z\in \overline{b\tilde b}$, then 
\begin{equation*}
	\textrm{dist}(z, \partial D(\omega))\geq d,
\end{equation*}
and
\begin{equation*}
	|\gamma_1|=|\overline{bz}|\leq |\overline{b\tilde b}|\leq 2d.
\end{equation*}
Hence,
\begin{equation*}
	|\gamma_1|\leq 2\textrm{dist}(z, \partial D(\omega)).
\end{equation*}

\medskip

Over all, from (\ref{eqn:3-0}) and (\ref{eqn:3-8}) we verify that the curve $\gamma$ satisfies both (U1) and (U2) for 
\begin{equation}
\label{eqn:const1}
	c=\max \left \{\frac{3\pi}{2}+2, 8\left (\frac{2}{1-\delta}\right )^{N_0+1}\delta^{-1}\right \}=8\left (\frac{2}{1-\delta}\right )^{N_0+1}\delta^{-1},
\end{equation}
because $\delta^{- 1}(1-\delta)^{-1}\geq 4$ if $\delta\in (0, 1)$.

\medskip

\noindent
{\bf Case 4-(iii).} $b_{\infty}\not\in E(\omega)$.

Let $J_b\subset I_1^1$ be the component of $\mathbb R\setminus E(\omega)$ containing $b_{\infty}$.

First, we suppose that $|J_b|\geq d:=|b-b_{\infty}|$.
Let $b_{\textrm{0}}$ be a point on $J_b$ with $|b_{\infty}-b_0|=\frac{d}{4}$ and $\textrm{dist}(b_0, \partial D(\omega))\geq \frac{d}{4}$.
Such a point exists because $|J_b|\geq d$.
Put $\tilde b=b_{\textrm{0}}-d\sqrt{-1}$ and $\tilde a=b_{\textrm{0}}+d\sqrt{-1}$.
Let $\tilde \gamma$ be the union of two line segments, $\overline{b\tilde b}$ and $\overline{\tilde b\tilde a}$.
We also take the hyperbolic geodesic $\hat \gamma$ in $\mathbb H$ connecting $a$ and $\tilde a$.
Then, $\gamma:=\tilde \gamma\cup\hat \gamma$ is a curve in $D(\omega)$ connecting $a$ and $b$.

\begin{figure}[htbp]
	\includegraphics[width=15cm]{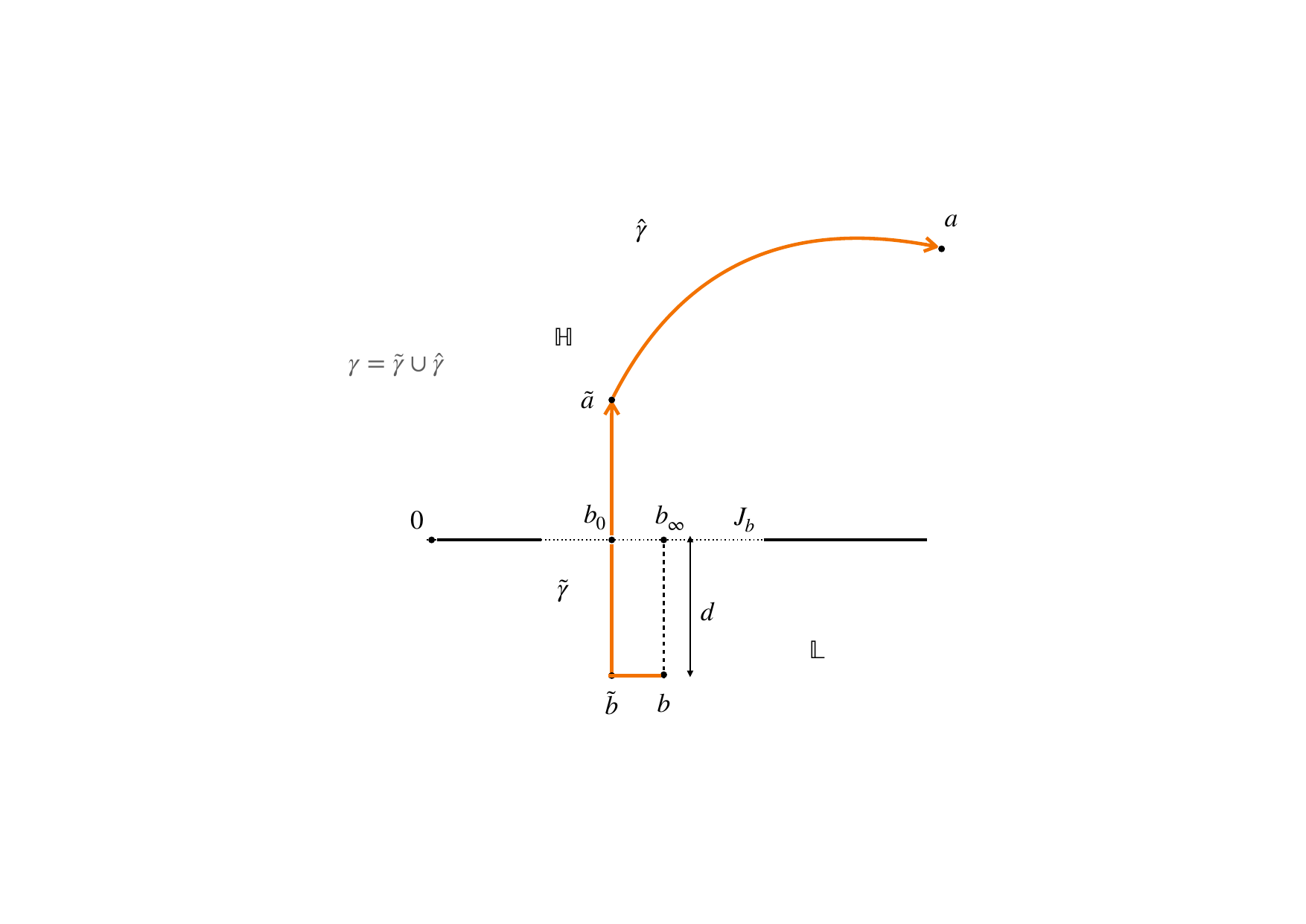}
	\caption{}
\end{figure}

We have
\begin{equation*}
	|\gamma|=|\tilde \gamma|+|\hat \gamma|\leq \frac{\pi}{2}|a-\tilde a|+2d+\frac{d}{4}<\frac{\pi}{2}|a-\tilde a|+3d,
\end{equation*}
and
\begin{equation*}
	|a-\tilde a|\leq |a-b|+|\tilde \gamma| \leq |a-b|+3d.
\end{equation*}
Hence,
\begin{equation*}
	|\gamma|\leq (2\pi +3)|a-b|,
\end{equation*}
since $d\leq |a-b|$.
Thus, we see that the curve $\gamma$ satisfies the condition (U1) for $c=2\pi +3$.

Let us consider the condition (U2).
Let $z$ be a point on $\gamma$ and $\gamma\setminus\{z\}=\gamma_1\sqcup\gamma_2$, where $\gamma_1$ and $\gamma_2$ are components of $\gamma\setminus\{z\}$ with $|\gamma_1|\leq |\gamma_2|$.

If $z\in \hat \gamma$, then we put $\hat \gamma\setminus \{z\}=\hat \gamma_1 \sqcup \hat \gamma_2$, where $\hat \gamma_1\ni a$. 
By using the same arguments as in (i)-a and (i)-b of Case 4-(ii), we obtain
\begin{equation*}
	|\gamma_1|\leq \left (\frac{\pi}{2}+3\right )\textrm{dist}(z, \partial D(\omega)).
\end{equation*}

If $z\in \hat \gamma$, then we put $\hat \gamma\setminus \{z\}=\hat \gamma_1 \sqcup \hat \gamma_2$, where $\hat\gamma_1\ni a$.
\begin{equation*}
	|\gamma_1|\leq \frac{\pi}{2}\textrm{dist}(z,  \mathbb R)\leq \frac{\pi}{2} \textrm{dist}(z, \partial D(\omega)).
\end{equation*}

If $z\in \tilde \gamma$, then
\begin{equation*}
	|\gamma_1|\leq |\tilde \gamma|\leq 3d,
\end{equation*}
and
\begin{equation*}
	\textrm{dist}(z, \partial D(\omega))\geq \textrm{dist}(b_0, \partial D(\omega))\geq \frac{d}{4}.
\end{equation*}
Thus, we obtain
\begin{equation*}
	|\gamma_1|\leq 12\textrm{dist}(z, \partial D(\omega)).
\end{equation*}

Therefore, the curve $\gamma$ satisfies both (U1) and (U2) for $c=12$.

Finally, we consider the case where $|J_b|\leq d$.
Let $b_{\infty}'\in E(\omega)$ be a point of $\partial J_b$ with
$|b_{\infty}-b_{\infty}'|\leq \frac{1}{2}|J_b|\leq \frac{d}{2}$. 

We construct a curve $\gamma'\subset D(\omega)$ connecting $a$ and $b':=b_{\infty}'-d\sqrt{-1}$ by using the same manner as in Case 4-(ii) for $a$ and $b'$.
Put $\gamma=\gamma'\cup \overline{b'b}$.

We consider the condition (U1) for $\gamma$.

It is easily seen that 
$$
|\overline{b'b}|\leq \frac{d}{2}\leq \frac{1}{4}|a-b|
$$ 
and 
$$
|a-b'|\leq |a-b|+|\overline{b'b}|\leq \frac{3}{2}|a-b|.
$$
The argument of (\ref{eqn:3-0}) in Case 4-(ii) gives us
\begin{equation*}
	|\gamma'|\leq \left (\frac{3\pi}{2}+2\right )|a-b'|.
\end{equation*}
Hence, we obtain
\begin{equation*}
	|\gamma|=|\gamma'|+|\overline{b'b}|< \left (3\pi +4\right )|a-b|.
\end{equation*}
The curve $\gamma$ satisfies (U1) for $c=3\pi +4$.

We consider the condition (U2).
Let $z$ be a point on $\gamma(=\gamma'\cup \overline{b'b})$ and $\gamma\setminus\{z\}=\gamma_1\sqcup\gamma_2$, where $\gamma_1$ and $\gamma_2$ are components of $\gamma\setminus\{z\}$ with $|\gamma_1|\leq |\gamma_2|$.

If $z\in \gamma'$, then we put $\gamma'\setminus\{z\}=\gamma_1'\sqcup \gamma_2'$ with $|\gamma_1'|\leq |\gamma_2'|$.
From the argument in Case 4-(ii), we have
\begin{equation*}
	|\gamma_1'|\leq c\textrm{dist}(z, \partial D(\omega)),
\end{equation*}
where 
$$
c=8\left (\frac{2}{1-\delta}\right )^{N_0+1}\delta^{-1}.
$$
Hence,
\begin{eqnarray*}
	|\gamma_1|&\leq &|\gamma_1'|+|b-b'|\leq c\textrm{dist}(z, \partial D(\omega))+\frac{1}{2}|J_b| \\
	&\leq & c\textrm{dist}(z, \partial D(\omega))+\frac{d}{2}.
\end{eqnarray*}
From the argument of (\ref{eqn:3-6}), we obtain
\begin{equation*}
	d\leq 2\left (\frac{2}{1-\delta}\right )^{N_0+1}\delta^{-1}\textrm{dist}(z, \partial D(\omega)).
\end{equation*}
Thus,
\begin{equation*}
	|\gamma_1|\leq \left (c+\left (\frac{2}{1-\delta}\right )^{N_0+1}\delta^{-1}\right )\textrm{dist}(z, \partial D(\omega)).
\end{equation*}

If $z\in \overline{b'b}$, then
\begin{equation*}
	|\gamma_1|\leq |z-b|\leq |b_{\infty}'-b_{\infty}|\leq\frac{d}{2}\leq\left (\frac{2}{1-\delta}\right )^{N_0+1}\delta^{-1}\textrm{dist}(z, \partial D(\omega)).
\end{equation*}
\medskip

From all arguments above, we conclude that any points $a, b$ in $D(\omega)$ are connected by a curve in $D(\omega)$ satisfying the condition (U1) and (U2) for
\begin{equation*}
	c(N_0, \delta):=c+\left ( \frac{2}{1-\delta}\right )^{N_0+1}\delta^{-1}=9\left ( \frac{2}{1-\delta}\right )^{N_0+1}\delta^{-1}.
\end{equation*}
We verify that $D(\omega)$ is a $c(N_0, \delta)$-uniform domain.
\qed

\medskip

\section{Proof of Theorem \ref{MyThm1} (Part 2)}
In this section, we show that if $N(\omega, \delta)=\infty$ for any $\delta >0$, then $D(\omega)$ is not a uniform domain.

Suppose that there exists $c\geq 1$ such that (U1) and (U2) are satisfied for any $a, b\in D(\omega)$ and for some $\gamma$.

Take $N\in \mathbb N$ so that $c<N/2$.
Since
\begin{equation*}
	\frac{x-1}{|\log (1-N^{-x})|}-\frac{x-1}{\log 2}\to \infty \quad (x\to \infty),
\end{equation*}
there exist $L>0$ and $K\in \mathbb N$ such that
\begin{equation}
\label{eqn:K}
\frac{L-1}{\log 2}\log N-3
	<K<\frac{L-1}{|\log (1-N^{-L})|}\log N-1.
\end{equation}

For those $K, L$, we take $M\in \mathbb N$ so that $q_{M+i}<N^{-L}$ for $i=0, 1, 2, \dots , K$.
The condition $N(\omega, N^{-L})=\infty$ guarantees the existence of $M$.
We may assume that $K, L, M, N$ are sufficiently large.

Now, we consider a closed interval $I_{M-1}$ in the $(M-1)$-th step of the construction on $E(\omega)$.
We remove the open interval $J_M$ with length $q_M|I_{M-1}|$ from $I_{M-1}$ and we have two closed intervals $I_M^i$ $(i=1, 2)$ with the same length.
Hence,
\begin{equation}
	|I_{M}^1|=|I_M^2|=\frac{1}{2}(1-q_M)|I_{M-1}|\geq\frac{1}{2}(1-N^{-L})|I_{M-1}|.
\end{equation}
and
\begin{equation}
	|J_M|=q_M|I_{M-1}|<N^{-L}|I_{M-1}|.
\end{equation}

Let $x_0\in \mathbb R$ be the middle point of $J_M$.
We define $a\in \mathbb H$ and $b\in \mathbb L$ by
\begin{equation*}
	a=x_0+N^{-L+2}|I_{M-1}|\sqrt{-1}, \quad b=x_0-N^{-L+2}|I_{M-1}|\sqrt{-1}.
\end{equation*}

From our assumption, there exists a curve $\gamma$ in $D(\omega)$ connecting $a$ and $b$ such that it satisfies (U1) and (U2) for $c$ $(<N)$.
We look for $z\in \gamma\cap\mathbb R$ and consider $\gamma\setminus\{z\}=\gamma_1\cup\gamma_2$ with $\gamma_1\ni a$.

First of all, if $z\not\in I_{M-1}$, then 
\begin{equation*}
	|\gamma|>|I_{M-1}|\geq N (2N^{-L+2})|I_{M-1}|>c|a-b|.
\end{equation*}
The condition (U1) does not hold for $c$.
Hence, $z$ must be in $I_{M-1}=I_{M}^1\cup J_{M}\cup I_{M}^2$.

If $z\in J_M$, then
\begin{equation*}
	c\ \textrm{dist}(z, \partial D(\omega))\leq\frac{N}{2}|J_M|\leq\frac{1}{2}N^{-L+1}|I_{M-1}|.
\end{equation*}
On the other hand, since $|\gamma_1|, |\gamma_2| \geq |a-x_0|=N^{-L+2}|I_{M-1}|$,
\begin{equation*}
	\min (|\gamma_1|, |\gamma_2|)>c\ \textrm{dist}(z, \partial D(\omega)).
\end{equation*}
The condition (U2) does not hold. Hence, $z\not\in J_M$ and $z\in I_M^1\cup I_M^2$.

We may assume that $z\in I_M^1$.
Since $z\in D(\omega)$, $z$ is contained in some $J_{M+m}\subset I_M^1$ $(m\geq 1)$, where $J_n\subset I$ is an open interval obtained in the $n$-th step of the construction of $E(\omega)$.

We have
\begin{equation}
\label{eqn:Fundametal-K1}
	|I_{M+i}^k|=\left ( \frac{1}{2}\right )^{i+1}\prod_{j=0}^{i}(1-q_{M+j})|I_{M-1}|\quad (k=1, 2),
\end{equation}
where $I_{M+i}^k$ $(k=1, 2)$ are closed intervals adjoining $J_{M+i}$.
and
\begin{equation}
\label{eqn:Fundamental-K2}
	|J_{M+i}|=q_{M+i}|I_{M+i-1}^k|.
\end{equation}
If $m\leq K$, then $q_{M+i}<N^{-L}$ for $i=0, 1, \dots , m$. 
From (\ref{eqn:Fundametal-K1}) and (\ref{eqn:Fundamental-K2}), we have
\begin{equation*}
	 \left (\frac{1-N^{-L}}{2}\right )^{m+1}|I_{M-1}|\leq |I_{M+m}^k|\leq \left (\frac{1}{2}\right )^{m+1}|I_{M-1}|
\end{equation*}
and
\begin{equation*}
	|J_{M+m}|\leq q_{M+m}|I_{M+m-1}|\leq \left (\frac{1}{2}\right )^{m}N^{-L}|I_{M-1}|.
\end{equation*}
Since $z\in J_{M+m}$, $|\gamma_1|\geq |a-z|$, $|\gamma_2|\geq |b-z|$. Hence, we have
\begin{equation}
\label{eqn:gammalength}
	|\gamma_1|, |\gamma_2|\geq |I_{M+m}^k|\geq \left (\frac{1-N^{-L}}{2}\right )^{m+1}|I_{M-1}|.
\end{equation}
On the other hand, 
\begin{equation}
\label{eqn:distance}
	\textrm{dist}(z, \partial D(\omega))\leq |J_{M+m}|\leq \left (\frac{1}{2}\right )^{m}N^{-L}|I_{M-1}|.
\end{equation}
However, from (\ref{eqn:K}), we have
\begin{equation*}
	m\leq K\leq \frac{L-1}{|\log (1-N^{-L})|}\log N-1.
\end{equation*}
This implies
\begin{equation}
\label{eqn:Fundamental-K3}
	N^{-L+1}\leq (1-N^{-L})^{K+1}\leq (1-N^{-L})^{m+1}.
\end{equation}
From (\ref{eqn:gammalength})--(\ref{eqn:Fundamental-K3}), we have
\begin{equation*}
	c\textrm{ dist}(z, \partial D(\omega))< \frac{N}{2}\textrm{dist}(z, \partial D(\omega))\leq |\gamma_1|, |\gamma_2|.
\end{equation*}
Therefore, $\gamma$ does not satisfy the condition (U2) if $\gamma\cap J_{M+m}\not=\emptyset$ for $m\in \{1, 2, \dots , K\}$.

Finally, we suppose that $z\in J_{M+m}$ for some $m>K$.
Since $J_{M+m}\subset I_{M+K}^k$ ($k=1$ or $2$), we have
\begin{equation*}
	|J_{M+m}|\leq \frac{1}{2}|I_{M+K}|\leq \left (\frac{1}{2}\right )^{K+1}|I_{M-1}|.
\end{equation*}
Hence,
\begin{equation}
\label{eqn:distance2}
	\textrm{dist}(z, \partial D(\omega))\leq\frac{1}{2}|J_{M+m}|\leq \left (\frac{1}{2}\right )^{K+2}|I_{M-1}|.
\end{equation}
On the other hand,
\begin{equation}
\label{gammalength2}
	|\gamma_1|, |\gamma_2|\geq |a-b|=2N^{-L+2}|I_{M-1}|.
\end{equation}
However, from (\ref{eqn:K}), we have
\begin{equation*}
	\frac{L-1}{\log 2}\log N-3
	<K.
\end{equation*}
This implies
\begin{equation}
\label{eqn:FundamentalK-4}
	N\left (\frac{1}{2}\right )^{K+2}<2N^{-L+2}.
\end{equation}
From (\ref{eqn:distance2})--(\ref{eqn:FundamentalK-4}), we have
\begin{equation*}
	c\ \textrm{dist}(z, \partial D(\omega))<N\textrm{dist}(z, \partial D(\omega))\leq |\gamma_1|, |\gamma_2|.
\end{equation*}
Hence, we verify that if $\gamma\cap J_{M+m}\not=\emptyset$ for $m>K$, then $\gamma$ does not satisfy the condition (U2) for $c<N$.

Thus, there is no curve $\gamma$ in $D(\omega)$ for $a, b$ satisfying the conditions (U1) and (U2), and we conclude that $D(\omega)$ is not a uniform domain.

\section{Proof of Theorem \ref{MyThm2}}
Theorem \ref{MyThm2} is an immediate consequence of the following proposition (cf. \cite{Taylor} Corollary 2.1, \cite{MNP} Theorem D).

\begin{Pro}
\label{Pro:key}
	Let $D$ be a domain in $\mathbb C$ whose complement is a Cantor set.
	The domain $D$ is quasiconformally equivalent to $D(\omega_0)$ if and only if $\mathbb C\setminus D$ is uniformly perfect and $D$ is a uniform domain.
\end{Pro}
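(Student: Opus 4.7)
My plan is to handle the two implications separately, with the forward direction being essentially formal and the reverse direction requiring real work (or the invocation of an extension theorem). For the ``only if'' direction, the standard middle-thirds Cantor set $E(\omega_0)$ is uniformly perfect by self-similarity, and $D(\omega_0)$ is a uniform domain by Theorem \ref{MyThm1} with $\delta = \frac{1}{3}$ and $N_0 = N(\omega_0, \frac{1}{3}) = 1$. Since both uniform perfectness of a compact set and uniformity of its complementary domain are invariant under quasiconformal self-homeomorphisms of $\mathbb C$ (standard facts from \cite{Gehring-Hag}, \cite{Vaisala}), any domain $D$ quasiconformally equivalent to $D(\omega_0)$ automatically inherits the two conditions.

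For the ``if'' direction, the goal is to construct a global quasiconformal $\Phi : \mathbb C \to \mathbb C$ with $\Phi(E(\omega_0)) = E := \mathbb C \setminus D$. I would proceed in two steps: first, produce a quasisymmetric bijection $f : E(\omega_0) \to E$ of metric spaces; second, extend $f$ to a quasiconformal self-map of $\mathbb C$. For step one, both Cantor sets carry a natural dyadic tree structure. For $E(\omega_0)$ this is built into the construction; for $E$, one builds it recursively, using uniform perfectness together with total disconnectedness to split $E$ at each level $k$ into $2^k$ pieces whose diameters are pairwise comparable and whose mutual separations are comparable to their diameters (with constants depending only on the uniform-perfectness constant of $E$). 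Matching the trees level by level yields $f$, and the scale-by-scale comparability gives a quasisymmetry bound. For step two, one extends $f$ across the gaps, using the fact that the gaps of $E(\omega_0)$ are (round-enough) open sets accessible by hyperbolic geodesics and the assumption that $D$ is uniform, which allows one to map each gap of $E(\omega_0)$ onto the corresponding component-structure of $D$ with uniformly bounded distortion. This is exactly the content of a MacManus-type extension theorem (cf.\ \cite{MacManus}, \cite{Taylor} Corollary 2.1, \cite{MNP} Theorem D), and the most economical route is to cite it.

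The principal obstacle is controlling the quasiconformal dilation of the extension uniformly across all scales. Uniform perfectness of $E$ supplies the ``vertical'' control, in that pieces at any fixed level of the dyadic tree are mutually comparable in size; uniformity of $D$ supplies the ``horizontal'' control, in that the curves joining matched points in the definition of uniform domain can be used to traverse gaps with bounded distortion, in a manner analogous to the hyperbolic-geodesic constructions of \S 3. Handling these two conditions simultaneously is the crux of the matter: without uniform perfectness the dyadic matching breaks down (pieces at the same scale can be incomparable), and without uniformity of the complement the filling-in across gaps cannot be performed with uniform dilation. Both hypotheses are therefore essential, and my proof-plan ultimately reduces to citing the extension theorem referenced above rather than reproducing its (somewhat involved) construction.
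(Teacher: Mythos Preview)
Your proposal is correct and aligns with the paper's treatment: the paper does not give an independent proof of this proposition at all, but simply records it with a citation to \cite{Taylor} Corollary~2.1 and \cite{MNP} Theorem~D, which is exactly where your argument lands for the nontrivial direction. Your additional remarks on the forward direction (quasiconformal invariance of uniform perfectness and of uniformity, together with Theorem~\ref{MyThm1} applied to $\omega_0$) are correct and more explicit than anything the paper states.
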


First, we see that $E(\omega)$ is uniformly perfect if $\omega\in\Omega_b$.

If $E(\omega)$ is not uniformly perfect for $\omega\in\Omega_b$, then there exist $z_n\in E(\omega)$ and $r_n\in (0, \textrm{diam}E(\omega)]$ $(n=1, 2, \dots )$ such that
\begin{equation*}
	A_n:=\{n^{-1}r_n<|z-z_n|<r_n\}\subset D(\omega).
\end{equation*}
As $\textrm{mod}(A_n)\to\infty$ $(n\to\infty)$, $\ell_{A_n}(\{|z-z_n|=\sqrt{n^{-1}}r_n\})\to 0$, where $\ell_{A_n}(c)$ is the  length of a curve $c\subset A_n$ with respect to the hyperbolic metric of $A_n$.
From Schwartz lemma, we have $\ell_{a_n}(c)\geq \ell_{D(\omega)}(c)$ for any curve $c\subset A_n\subset D(\omega)$. 
Hence, we see that $\textrm{inj}(\omega)=0$, where $\textrm{inj}(\omega)$ is the injectivity radius of $D(\omega)$ defined by (\ref{def:inj rad}) of \S 6.
From Lemma \ref{lemma:inj+=bounded}, which is also in \S 6, we verify that $\omega\not\in \Omega_b$ and we have a contradiction.

If $N(\omega, \delta)<\infty$ for some $\delta >0$, then $D(\omega)$ is uniform (Theorem \ref{MyThm1}).
Therefore, from Proposition \ref{Pro:key}, we conclude that $D(\omega)$ is quasiconformally equivalent to $D(\omega_0)$ if $\omega\in \Omega_b$ and $N(\omega, \delta)<\infty$ for some $\delta\in (0, 1)$.

Conversely, if $\omega\not\in \Omega_b$, then it is already shown that $D(\omega)$ is not quasiconformally equivalent to $D(\omega_0)$ (\cite{Shiga} Theorem III).
From Theorem \ref{MyThm1}, $D(\omega)$ is not uniform if $N(\omega, \delta)=\infty$ for any $\delta>0$.
Hence, $D(\omega)$ is not quasiconformally equivalent to $D(\omega_0)$ from Proposition \ref{Pro:key}.

Thus, we complete the proof of the theorem.

\section{The volume of the moduli space (Proof of Theorem \ref{thm:volume})}
Let $m_{\infty}$ be the standard product probability measure of $(0, 1)^{\mathbb N}$.
As a subset of $(0, 1)^{\mathbb N}$, we may consider the volume $m_{\infty}(M(\omega))$ of the moduli space $M(\omega)$ of $\omega\in (0, 1)^{\mathbb N}$.

It follows from Theorem \ref{MyThm2} that $M(\omega_0)\subset \Omega_b$.
Moreover, we may show the following.
\begin{Pro}
If $\omega\in \Omega_b$, then $M(\omega)\subset \Omega_b$.
\end{Pro}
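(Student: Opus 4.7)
The plan is to characterize $\Omega_b$ in terms of a quasiconformal invariant of $E(\omega)$ and then transport this characterization along the QC equivalence. The natural invariant here is uniform perfectness of $E(\omega)$: I will show that $\omega\in\Omega_b$ is equivalent to $E(\omega)$ being uniformly perfect. One direction is already implicit in the arguments of \S 5: the opening argument there proves, via Lemma \ref{lemma:inj+=bounded} on the injectivity radius, that $\omega\in\Omega_b$ implies $E(\omega)$ is uniformly perfect. So the only new ingredient needed is the converse.

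For the converse, I assume $\sup_n q_n=1$ and pick a subsequence $q_{n_k}\to 1$. I fix any closed interval $I$ of length $\ell$ arising at step $n_k-1$; when the middle open interval $J$ of length $q_{n_k}\ell$ is removed, the two adjacent level-$n_k$ closed intervals each have length $(1-q_{n_k})\ell/2$. Let $a$ be the common endpoint of $J$ and the left adjacent piece $I_{n_k}^{1}$, so that $a\in E(\omega)$, and set $r_k:=q_{n_k}\ell/2$. Because $E(\omega)\subset\mathbb R$, every point of $E(\omega)\cap\{w\in\mathbb C:|w-a|<r_k\}$ lies inside $I_{n_k}^{1}$ and is therefore within distance $(1-q_{n_k})\ell/2$ of $a$. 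Since $(1-q_{n_k})/q_{n_k}\to 0$, for every fixed $c\in(0,1)$ the annulus $\{cr_k<|w-a|<r_k\}$ is eventually disjoint from $E(\omega)$, so $E(\omega)$ fails to be uniformly perfect.

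The last step invokes the classical fact that uniform perfectness of a compact subset of $\mathbb C$ is preserved under quasiconformal self-homeomorphisms of $\mathbb C$ (a standard consequence of QC distortion of annuli; cf.\ \cite{Sugawa}). Given $\omega\in\Omega_b$ and $\omega'\in M(\omega)$, pick a quasiconformal $\varphi:\mathbb C\to\mathbb C$ with $\varphi(E(\omega))=E(\omega')$. Since $E(\omega)$ is uniformly perfect by the first step, so is $E(\omega')=\varphi(E(\omega))$; and the converse established in the preceding paragraph then forces $\omega'\in\Omega_b$.

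The one point that requires care is the ``not uniformly perfect'' estimate in the second paragraph: the test point $a$ must be chosen inside $E(\omega)$ and the scale $r_k$ must be tied to the widening gap $J$ so that the Cantor set has no points in the resulting annulus for any prescribed $c\in(0,1)$. Everything else is routine: QC-invariance of uniform perfectness is classical, and the other implication is already contained in the proof of Theorem \ref{MyThm2}.
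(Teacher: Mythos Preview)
Your approach is sound but differs from the paper's. The paper transports the \emph{injectivity radius} $\textrm{inj}(\omega)$ as the QC-invariant: it proves Lemma~\ref{lemma:inj+=bounded} (namely $\textrm{inj}(\omega)>0 \iff \omega\in\Omega_b$) and then applies Wolpert's lemma to obtain $\textrm{inj}(\omega')\geq K(\varphi)^{-1}\,\textrm{inj}(\omega)>0$. You instead transport \emph{uniform perfectness} of $E(\omega)$. Both invariants do the job; your route has the advantage that QC-invariance of uniform perfectness is an immediate consequence of annulus distortion, whereas Wolpert's inequality needs more machinery. Note, however, that your forward implication ($\omega\in\Omega_b\Rightarrow E(\omega)$ uniformly perfect) is borrowed from \S5, and the argument there itself invokes Lemma~\ref{lemma:inj+=bounded} --- so you are implicitly relying on the main ingredient of the paper's own proof. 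A fully independent version would establish that implication directly from $q_n\le\delta<1$, which is elementary (gaps and adjacent intervals at each level have comparable lengths).

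There is one slip in your annulus construction. You fix ``\emph{any} closed interval $I$'' at step $n_k-1$ and then assert that every point of $E(\omega)\cap\{|w-a|<r_k\}$ lies in $I_{n_k}^{1}$. This can fail: once $q_{n_k}>\tfrac12$ the ball of radius $r_k=q_{n_k}\ell/2$ about $a$ overruns the left endpoint of $I$, and if the gap immediately to the left of $I$ (created at some earlier step $j$ with $q_j$ possibly very small) is short, that ball meets the neighbouring step-$(n_k-1)$ interval and hence $E(\omega)$. The fix is trivial: take $I$ to be the \emph{leftmost} interval at step $n_k-1$, so that nothing of $E(\omega)$ lies to the left of $I$; then your containment claim is valid and the ratio $(1-q_{n_k})/q_{n_k}\to 0$ finishes the argument exactly as you wrote.
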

\begin{proof}
	For $\omega\in (0, 1)^{\mathbb N}$, we denote by $\textrm{inj}(\omega)$ the injectivity radius of $D(\omega)$, that is,
	\begin{equation}
	\label{def:inj rad}
		\textrm{inj}(\omega)=\frac{1}{2}\inf_{c\in \mathcal S}\ell_{D(\omega)} ([c]),
	\end{equation}
	where $\mathcal S$ is the set of non-trivial free homotopy classes of non-peripheral simple closed curves $c$ in $D(\omega)$ and $\ell_{D(\omega)} ([c])$ is the hyperbolic length in $D(\omega)$ of geodesic $[c]$ belonging to $c$.
	Since $E(\omega)$ has no isolated points, \lq\lq non-peripheral \rq\rq means that $\mathcal S$ does not contain a homotopy class of simple closed curves which do not shrink to $\infty$ on the Riemann sphere.
	
	First, we show
	\begin{lemma}
	\label{lemma:inj+=bounded}
		$\textrm{inj}(\omega)>0$ if and only if $\omega\in \Omega_b$.
	\end{lemma}
	\begin{proof}
	We have already seen that if $\omega\not\in \Omega_b$, then $\textrm{inj}(\omega)=0$ (see \cite{Shiga} \S 5).
	It suffices to show that if $\omega\in \Omega_b$, then $\textrm{inj}(\omega)>0$.
	
	Suppose that $\omega\in \Omega_b$ and $\textrm{inj}(\omega)=0$.
	Then, there exists a sequence $\{c_n\}_{n=1}^{\infty}$ of free homotopy classes of non-peripheral simple closed curves in $D(\omega)$ such that $\lim_{n\to\infty}\ell_{D(\omega)}([c_n])=0$.
	
	Since $\omega=(q_n)_{n=1}^{\infty} \in\Omega_b$, we may take $\delta\in (0, 1)$ so that $q_n\leq\delta$ $(n=1, 2, \dots )$.
	Let $D(\omega)=\cup_{n\in \mathbb Z}P_n$ be the {\it   natural} pants decomposition according to the construction of $D(\omega)$, namely, the boundary curves of pairs of pants of the pants decomposition are obtained from simple closed curves surrounding $I_k^i$ $(k\in \mathbb N; i=1, 2, \dots , 2^n)$ (see \cite{Shiga} \S 3).
	We may assume that $\partial P_n$ consists of closed geodesics, say $\gamma_n^1, \gamma_n^2$ and $\gamma_n^3$ for any $n\in \mathbb Z$.
	Then, we show the following:
	
	\medskip
	
	\noindent
	{\bf Claim.} There exists a constant $L=L(\delta)>0$ depending only on $\delta$ such that $\ell (\gamma_n^j)>L$ for $j\in\{1, 2, 3\}$ and for any $n\in \mathbb Z$.
	\medskip
	
	\noindent
	{\it Proof of the claim.} From the construction of the pants decomposition, there exist closed intervals $I_k, I'_k$ and open interval $J_k$ appearing in $k$-th step $E_k$ of $E(\omega)$ for some $k\in \mathbb N$ such that
	$I_{k-1}:=I_k\cup J_k\cup I'_k$ is a closed interval of $E_{k-1}$,
	\begin{equation*}
		|I_k|=|I'_k|=\frac{1-q_k}{2}|I_{k-1}|,
	\end{equation*}
	and $\gamma_n^1$ is a simple closed curve in $\mathbb C\setminus (I_{k}\cup I'_k)$ separating $I_{k}$ and $I'_{k}$.
	The domain $X_{q_k} :=\mathbb C\setminus (\partial I_k\cup\partial I'_k)$ is conformally equivalent to $Y_{q_k}:=\mathbb C\setminus \{-1, -\frac{q_k}{2}, \frac{q_k}{2}, 1\}$.
	Since $0<q_k\leq \delta$, we see that there exists a constant $L$ depending only on $\delta$ such that the hyperbolic length of the simple closed geodesic $\beta_{q_k}$ separating $\{-1, -\frac{\delta}{2}\}$ and $\{\frac{\delta}{2}, 1\}$ in $Y_{q_k}$ is not less than $L$.
	
	To show this, we consider a doubly connected domain $A_{q_k}:=\{|z|<1\}\setminus [-\frac{q_k}{2}, \frac{q_k}{2}]$ and the simple closed geodesic $\alpha_{q_k}$ in $A_{q_k}$ which separates $[-\frac{q_k}{2}, \frac{q_k}{2}]$ and $\{|z|=1\}$.
	The modulus of $A_{q_k}$ tends to zero as $q_k\searrow 0$.
	Hence, we verify that $\ell_{A_{q_k}}(\alpha_{q_k})\searrow 0$ as $q_k \searrow 0$.
	Since $A_{q_k}\subset Y_{q_k}$, it follows from the Schwarz lemma that
	\begin{equation*}
		\ell_{Y_{q_k}}([\alpha_{q_k}])\leq \ell_{Y_{q_k}}(\alpha_{q_k})\leq \ell_{A_{q_k}}(\alpha_{q_k}),
	\end{equation*}
	where $[\alpha_{q_k}]$ is the geodesic homotopic to $\alpha_{q_k}$ in $Y_{q_k}$.
	Hence, we have 
	$$
	\lim_{q_k\to 0}\ell_{Y_{q_k}}([\alpha_{q_k}])=0.
	$$
	In particular, there exists a constant $L_0>0$ depending only on $\delta$ such that 	
	\begin{equation}
	\label{eqn:6-1}
	0<\ell_{Y_{q_k}}([a_{q_k}])<L_0	
	\end{equation}
	  if $0<q_k\leq\delta$.
	
	On the other hand, the closed geodesic $\beta_{q_k}$ has to intersect $[\alpha_{q_k}]$ transversely.
	From the collar theorem (cf.\ \cite{Buser}) and (\ref{eqn:6-1}), we verify that there exists a constant $L>0$ depending only on $\delta$ such that
	\begin{equation*}
		 \ell_{Y_{q_k}}(\beta_{q_k})>L
	\end{equation*}
	if $0<q_k\leq \delta$.
	
	Since the closed curve $\gamma_n^1$ is homotopic to the simple closed geodesic separating $\partial I_k$ and $\partial I'_k$ in $X_{q_k}$ and the hyperbolic length of the closed geodesic is the same as that of $\beta_{Y_{q_k}}$, we obtain $\ell_{X_{q_k}}(\gamma_n^1)\geq \ell_{Y_{q_k}}(\beta_{q_k}) \geq L$.
	Since $X_{q_k}\supset D(\omega)$, we have
	\begin{equation*}
		\ell_{D(\omega)}(\gamma_n^1)\geq \ell_{X_{q_k}}(\gamma_n^1)\geq L
	\end{equation*}
	as desired.
	
	The same argument works for $\gamma_n^2$ and $\gamma_n^3$.
	Thus, we complete the proof of the claim. \qed
	
	\begin{Rem}
		In \cite{Kinjo} Lemma 2.2, E. Kinjo shows the claim with a more accurate estimate for $L$. 
	\end{Rem}
	
	\medskip
	
	We continue with the proof of the lemma. 
	
	Because of the claim, we see that for a sufficiently large $n$, the geodesic $[c_n]$ passes at least two pairs of pants in the pants decomposition.
	Moreover, since $[c_n]$ is a closed curve, there exists a pair of pants $P$ such that $[c_n]\cap P$ contains a geodesic arc $\alpha$ whose end points are on the same boundary curve of $P$. 
	
	Viewing that the hyperbolic length of every component of $\partial P$ is greater than $L$ and that the pair of pants $P$ consists of two congruent right hexagons, we see from the trigonometry of hyperbolic geometry (cf.\ \cite{Buser} Chapter 2) that the hyperbolic lengths of $\alpha$ is greater than a constant $C>0$ independent of $n$.
	This implies that $\ell_{D(\omega)}([c_n])>L$ and we have a contradiction. The proof of the lemma is completed.  	
	\end{proof}
	
	Now, we prove the proposition.
	For $\omega\in \Omega_b$, we take $\omega '$ in $M(\omega)$.
	Then, there exists a quasiconformal mapping $\varphi : \mathbb C\to \mathbb C$ with $\varphi (E(\omega))=E(\omega')$.
	It follows from Wolpert's lemma (\cite{Wolpert}) that
	\begin{equation*}
		\textrm{inj}(\omega')\geq \frac{1}{K(\varphi)}\textrm{inj}(\omega),
	\end{equation*}
	where $K(\varphi)$ is the maximal dilatation of $\varphi$.
	Hence, $\textrm{inj}(\omega ')>0$ and $\omega'\in \Omega_b$ from the above lemma.
	This implies that $M(\omega)\subset \Omega_b$ if $\omega\in \Omega_b$ and the proof is completed.	
\end{proof}

We see that $\Omega_b=\bigcup_{k=2}^{\infty}\Omega (k)$, where $\Omega (k)=\{\omega=(q_n)_{n=1}^{\infty}\in (0, 1)^{\mathbb N} \mid q_n\leq 1-k^{-1}\ (n=1, 2, \dots )\}$.
Since $m_{\infty}(\Omega (k))=0$ $(k=2, 3, \dots)$, we obtain $m_{\infty}(\Omega _b)=0$.
Hence, immediately we have $m_{\infty}(M(\omega))\leq m_{\infty}(\Omega_b)=0$ if $\omega\in \Omega_b$.
The proof of Theorem \ref{thm:volume} is completed.

\end{document}